\newtheorem{theorem}{Theorem}[section]
\newtheorem{definition}[theorem]{Definition}
\newtheorem{lemma}[theorem]{Lemma}
\theoremstyle{remark}
\newtheorem{remark}[theorem]{Remark}
\newcommand{\NN}{\mathbb{N}}
\newcommand{\KK}{\mathbb{K}}
\newcommand{\M}{\mathrm{M}}
\newcommand{\cF}{\mathcal{F}}
\newcommand{\cM}{\mathcal{M}}
\newcommand{\cU}{\mathcal{U}}
\DeclareMathOperator{\Fun}{Fun}
\DeclareMathOperator{\Cat}{Cat}
\DeclareMathOperator{\Lex}{Lex}
\DeclareMathOperator{\ex}{ex}
\DeclareMathOperator{\colim}{colim}
\DeclareMathOperator{\Res}{Res}
\DeclareMathOperator{\Spt}{\text{Spt}}
\DeclareMathOperator{\Spc}{\text{Spc}}
\DeclareMathOperator{\Grp}{\text{Grp}}
\newcommand{\Orb}{\mathrm{Orb}}
\DeclareMathOperator{\Stab}{stab}
\DeclareMathOperator{\incl}{inc}
\DeclareMathOperator{\loc}{loc}
\DeclareMathOperator{\Map}{Map}
\newcommand\blfootnote[1]{%
  \begingroup
  \renewcommand\thefootnote{}\footnote{#1}%
  \addtocounter{footnote}{-1}%
  \endgroup
}
\begin{document}
\title[An improvement of the Farrell-Jones conjecture for localising invariants]{An improvement of the Farrell-Jones conjecture for localising invariants}
\author{José Francisco Reis}
\address{Center for Mathematics and Applications (NOVA Math) and Department of Mathematics, NOVA SST}
\email{jfd.reis@campus.fct.unl.pt}
\thanks{}
\maketitle
\begin{abstract}
    The Farrell-Jones conjecture for lax monoidal finitary localising invariants was recently proved by Bunke-Kasprowski-Winges. In this short note, making use of the theory of noncommutative motives, we prove that the lax monoidal assumption is not necessary.
\end{abstract}
\pagenumbering{arabic}

\,

\section{Introduction}

\blfootnote{2020 Mathematics Subject Classification: 18F25, 19D10, 19D23, 19D55, 19E08.}

The computation of the algebraic $K$-theory of a group ring $RG$, for $R$ a commutative ring and $G$ a group (that can be infinite), is a very difficult problem.
The Farrell-Jones conjecture is a local-to-global statement that simplifies this problem. Roughly speaking, it claims that the algebraic $K$-theory of $RG$ is completely determined by the algebraic $K$-theory of the group rings $RV$, with $V$ a virtually cyclic subgroup\footnote{ Recall that a subgroup $V$ of $G$ is called virtually cyclic if it contains a cyclic subgroup of finite index.} of $G$.
This important conjecture was first formulated by Farrell and Jones in the 90's (see \cite{FJ93}) and is nowadays known to be true in several cases: 
hyperbolic groups,
finite dimensional CAT(0)-groups,
virtually solvable groups,
mapping class groups, etc.
For striking applications of the Farrell-Jones conjecture, we invite the reader to consult the surveys \cite{Luck11, Luck20, LR05}.

In the late 90's Davis and L\"uck \cite{DL98} proposed a general
setting for stating the Farrell-Jones conjecture:
let $\cF$ be a family of subgroups of $G$,
$G\Orb$ the orbit category\footnote{ Recall that the objects of $G\Orb$ are the left $G$-sets $G/V$, with $V$ a subgroup of $G$, and the morphisms are the maps of $G$-sets $G/V \rightarrow G/V'$.} of $G$,
$G_{\cF}\Orb$ the full subcategory of $G\Orb$ of those left $G$-sets with stabilizers in $\cF$, and $\mathbf{E}$ a functor from $G\Orb$ to the category $\Spt$ of Spectra.
Under these notations, the \emph{$\mathbf{E}$-assembly map} is the naturally induced map from the homotopy colimit of $\mathbf{E}$ over $G_{\cF}\Orb$ to the homotopy colimit of $\mathbf{E}$ over $G\Orb$, and the \emph{$\mathbf{E}$-isomorphism conjecture} is the claim that the $\mathbf{E}$-assembly map is a weak equivalence.
In \cite{DL98}, Davis and L\"uck proved that the classical Farrell-Jones isomorphism conjecture is equivalent to the $\mathbf{E}$-isomorphism conjecture when $\cF$ is the family of virtually cyclic  subgroups and $\mathbf{E}$ is the following composition
\begin{align}
\label{eq: davis and luck}
\mathbf{E}: G\Orb \stackrel{\overline{?}}{\longrightarrow} \Grp \stackrel{R[-]}{\longrightarrow} \text{Cat}_R \stackrel{\KK}{\longrightarrow} \Spt,
\end{align}
where $\Grp$ stands for the category of groupoids, $\text{Cat}_R$ for the category of $R$-linear categories, $\overline{?}$ is the  functor that sends a left $G$-set $G/V$ to the associated groupoid
$\overline{G/V}$, $R[-]$ stands for the $R$-linearization functor, and $\KK$ is the non-connective $K$-theory functor.
One of the advantages of this general setting is that by simply replacing $\KK$ by any other functor $H$, one immediately obtains a variant of the classical Farrell-Jones conjecture\footnote{ Given all these variations of the Farrell-Jones conjecture, Balmer and Tabuada, making use of the theory of noncommutative motives, explicitly described in \cite{BT13} the Fundamental Isomorphism Conjecture which implies all these variants of the Farrell-Jones conjecture.}.
Examples of such functors $H$ include  homotopy $K$-theory \cite{BL06}, Hochschild and Cyclic Homology \cite{LR06}, topological Hochschild Homology \cite{Luck10}, etc.

In the late 00's, Bartels and Reich \cite{BR07} introduced the Farrell-Jones conjecture with coefficients in a $R$-linear category $C$ equipped with a right $G$-action.
Similarly to the setting of Davis and L\"uck, they considered the following composition
\begin{align}
\label{eq: bartels and reich}
\mathbf{E}: G\Orb \stackrel{C *_{G} - }{\longrightarrow} \text{Cat}_R \stackrel{\KK}{\longrightarrow} \Spt,
\end{align}
where $C *_{G} -$ is the functor that sends a left $G$-set $G/V$ to its ``tensor product over $G$ with $C$", consult \cite[\S 2]{BR07} for details.
In the particular case where $C$ is the $R$-linear category $R_{\oplus}$ obtained from $R$ by formally adding finite direct sums, equipped with the trivial $G$-action, the above composition (\ref{eq: bartels and reich}) reduces to (\ref{eq: davis and luck}).
Hence, the setting of Bartels and Reich generalizes the one of Davis and L\"uck.
Other choices of $C$ lead, for example, to the Farrell-Jones conjecture with coefficients in a twisted group ring or, more generally, with coefficients in a crossed product ring.

Recently, Bunke, Kasprowski and Winges in \cite{BKW21} generalized the setting of Bartels and Reich to the realm of $\infty$-categories.
Since we will work in this setting, we now explain it in detail.
In what follows, we will denote by $\Cat^{\Lex}_{\infty,*}$ the (large) $\infty$-category of small left-exact $\infty$-categories\footnote{ Recall that a (small) $\infty$-category is called \emph{left-exact} if it is point
ed and admits all finite limits.} and finite limit preserving functors.
Let $BG$ be the groupoid with one object and group of automorphisms $G$. Note that there is a canonical inclusion $j^G:BG \rightarrow G\Orb$ sending the unique object of $BG$ to the left $G$-set $G/\{1\}$.
Also, let $C$ be a small left-exact $\infty$-category equipped with a right $G$-action, i.e., an object of $\Fun(BG, \Cat^{\Lex}_{\infty,*})$.
In what follows, we will denote the left Kan-extension of $C$ along $j^G$ by $C_G:G\Orb \rightarrow \Cat^{\Lex}_{\infty,*}$.
Finally, let $H: \Cat^{\Lex}_{\infty,*}\rightarrow  \M$ be a functor with values in a cocomplete stable $\infty$-category.
Under these notations, Bunke, Kasprowski and Winges considered the following composition:
\begin{align}
    \label{eq: BKW}
    \mathbf{E}: G\Orb \stackrel{C_G}{\longrightarrow}  \Cat^{\Lex}_{\infty,*} \stackrel{H}{\longrightarrow}  \M.
\end{align}
As explained in \cite[Example 1.6]{BCKW19}, in the particular case when the $\infty$-category $C$ is $R$-linear, the functor $C_G$ reduces to $C*_G - $.
Hence, the setting of Bunke, Kasprowski, and Winges generalizes the setting of Bartels and Reich.
Note that the setting of Bunke, Kasprowski and Winges is very general: one can vary the group $G$, the family of subgroups $\cF$, the $\infty$-category of coefficients $C$ and also the functor $H$.
Nevertheless, they were able to prove that the $\mathbf{E}$-isomorphism conjecture (with respect to (\ref{eq: BKW})) holds for a large class of lax monoidal functors $H$; consult Theorem \ref{main theorem 1} below.
The goal of this short note is to prove that the lax monoidal assumption on $H$ is not needed; consult Theorem \ref{main theorem 2} below.

\section{Statement of results}
\label{sec: defs and results}

We start by recalling from \cite{BKW21} some key definitions. 

\begin{definition}
\label{def: finitary localising}
A functor $H: \Cat^{\Lex}_{\infty,*}\rightarrow  \M$, with values in a cocomplete stable $\infty$-category, is called a \emph{finitary localising invariant} if it preserves zero objects and filtered colimits, sends excisive squares to pushout squares and inverts Morita equivalences.
\end{definition}

Let $\Spc$ denote the $\infty$-category of spaces. Recall from \cite[\S 2.1]{BKW21} that an object $K$ of a cocomplete $\infty$-category $\M$ is called \emph{compact} if the functor $\Map_{\M}(K,-):\M \rightarrow \Spc$ preserves filtered colimits.

\begin{definition}[{\cite[Definitions 2.4 and 2.5]{BKW21}}]
\label{def: phantom object-equivalence}
    Let $\M$ a cocomplete $\infty$-category.
    \renewcommand{\labelenumi}{(\roman{enumi})}
    \begin{enumerate}
        \item An object $M$ of $\M$ is called a \emph{phantom object} if $\Map_\M(K, M) \simeq *$ for every compact object $K$ of $\M$.
        \item A morphism $m: M \rightarrow M'$ in $\M$ is called a \emph{phantom equivalence} if $\Map_\M(K, m)$ is an equivalence of spaces for every compact object $K$ of $\M$.
    \end{enumerate}
\end{definition}

\begin{remark}
\label{remark: phathom=equivalence}
    When $\M$ is compactly generated, the notions of phantom equivalence and equivalence agree, see \cite[Rk. 2.6]{BKW21}.
\end{remark}

The main result of \cite{BKW21} is the following:

\begin{theorem}[{\cite[Theorem 1.4]{BKW21}}]
\label{main theorem 1}
Let $G$ be a group, $\cF$ a family  of subgroups of $G$, $C$ a small left-exact $\infty$-category equipped with a right  $G$-action, and $H: \Cat^{\Lex}_{\infty,*}\rightarrow  \M$ a functor with values in a cocomplete stable $\infty$-category.
Assume that $G$ is a Dress-Farrell-Hsiang-Jones (DFHJ) group relative to $\cF$ (consult \cite[Definition 7.1]{BKW21}) and that $H$ is a lax monoidal finitary localising invariant.
Under these assumptions, the assembly map
\begin{align}
    \label{eq: assembly-0}
    A_{\cF,H \circ C_G} : \colim_{G_{\cF}\Orb}(H \circ C_G) \longrightarrow H(\colim_{BG} C)
\end{align}
is a phantom equivalence.
\end{theorem}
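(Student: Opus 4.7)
My strategy is to reduce the phantom-equivalence statement to a verification on an arbitrary compact test object, and then to exploit the lax monoidal structure of $H$ together with the DFHJ geometry of $G$ to produce transfers that split the cofiber of the assembly map.

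First, since $\M$ is only assumed cocomplete and stable (not compactly generated), phantom equivalence is the correct notion to aim at, and by Definition \ref{def: phantom object-equivalence}(ii) it suffices to prove that for every compact $K \in \M$ the induced map of mapping spaces
\begin{equation*}
\Map_{\M}\bigl(K,\, \colim_{G_{\cF}\Orb}(H \circ C_G)\bigr) \longrightarrow \Map_{\M}\bigl(K,\, H(\colim_{BG} C)\bigr)
\end{equation*}
is an equivalence of spaces. Since $H$ is finitary and $G_{\cF}\Orb$ can be written as a filtered colimit of its finite subcategories, the source rewrites as a filtered colimit of mapping spaces, which turns the question into a controlled assertion about orbits $G/V$ with stabilisers $V \in \cF$.

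Second, I would use the lax monoidal hypothesis to manufacture transfers. A lax monoidal finitary localising invariant endows the unit $H(\mathbf{1})$ with an $E_\infty$-algebra structure in $\M$, makes each $H(D)$ into an $H(\mathbf{1})$-module, and transports duality data for dualisable $D \in \Cat^{\Lex}_{\infty,*}$ to trace/transfer morphisms in $\M$. These should be viewed as the $\infty$-categorical analogues of the classical Farrell-Hsiang transfers used in earlier proofs of Farrell-Jones type results. Third, the DFHJ condition on $G$ supplies, for every prescribed amount of control, a finite $G$-equivariant cover of a sufficiently large orbit whose pieces are indexed by subgroups in $\cF$. Feeding these covers into the transfers from the previous step yields, for every compact $K$, an explicit retraction of $\Map_{\M}(K,-)$ applied to the cofiber of the assembly map, so that this cofiber is phantom and the assembly map is a phantom equivalence.

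The main obstacle, I expect, is the fully coherent $\infty$-categorical bookkeeping: one has to assemble the transfers extracted from the DFHJ covers so that they compose compatibly up to coherent homotopy, match the Morita invariance and excision properties of $H$ with the equivariant decompositions produced by DFHJ, and remain compatible with the left Kan extension $C_G$ defining the coefficients. In the classical, $1$-categorical setting this is the job of controlled algebra à la Bartels-Reich; here its $\infty$-categorical replacement — presumably via equivariant coarse homotopy theory in the sense of Bunke-Engel-Kasprowski-Winges — is where the real technical weight of the argument should lie.
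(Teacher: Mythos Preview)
The paper does not contain a proof of this theorem at all: Theorem~\ref{main theorem 1} is stated as a citation of \cite[Theorem~1.4]{BKW21}, and the paper's own contribution is Theorem~\ref{main theorem 2}, which \emph{uses} Theorem~\ref{main theorem 1} as a black box (applied to the particular lax monoidal invariant $\cU_{\loc}\circ\Stab$) and then transports the conclusion along the induced colimit-preserving functor $\overline{H\circ\incl}$ out of $\cM_{\loc}$. So there is no ``paper's own proof'' of this statement to compare against; the relevant argument lives entirely in \cite{BKW21}.

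As for your proposal on its own terms: what you have written is a plausible narrative of the architecture of the Bunke--Kasprowski--Winges proof (transfers from the lax monoidal structure, geometric input from the DFHJ hypothesis, controlled/coarse methods to assemble everything), but it is not a proof, and you say so yourself in the last paragraph. Two concrete points where the sketch is loose: (i) rewriting $\colim_{G_\cF\Orb}(H\circ C_G)$ via ``$G_\cF\Orb$ as a filtered colimit of its finite subcategories'' does not by itself reduce the problem to anything tractable---the DFHJ machinery is needed precisely because there is no finite-stage reduction of this kind; and (ii) the assertion that the transfers yield ``an explicit retraction of $\Map_\M(K,-)$ applied to the cofiber'' hides the entire content of the argument, namely the construction and compatibility of these transfers in the equivariant coarse setting. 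In short, you have correctly identified where the difficulty lies, but the proposal does not bridge it, and in any case the present paper simply invokes the result rather than reproving it.
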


\begin{remark}
Note that $G/G$ is the terminal object of the orbit category $G\Orb$. Hence, the colimit of the composition $H\circ C_G$ over $G\Orb$ is given by $(H \circ C_G)(G/G)$.
We have the computation $\colim_{BG}C \simeq \colim_{BG}\Res^{G}_G(C) \simeq C_G(G/G)$, see \cite[(1.3)]{BKW21}. Consequently, the assembly map (\ref{eq: assembly-0}) agrees with the $\mathbf{E}$-assembly map of the above composition (\ref{eq: BKW}).
\end{remark}

Theorem \ref{main theorem 1} (together with Remark \ref{remark: phathom=equivalence}) proves the Farrell-Jones conjecture for a very large class of groups $G$, of coefficients $C$, and of functors $H$.
In this short note, making use of the theory of noncommutative motives, we improve Theorem \ref{main theorem 1} as follows:

\begin{theorem}
\label{main theorem 2}
Theorem \ref{main theorem 1} holds ipsis verbis
with $H$ a finitary localising invariant (the lax monoidal assumption is not necessary).
\end{theorem}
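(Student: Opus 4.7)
The plan is to reduce the statement to Theorem \ref{main theorem 1} by factoring an arbitrary finitary localising invariant through a universal one that happens to be lax monoidal. This is the move made available by the theory of noncommutative motives, as hinted in the introduction.

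Concretely, I would invoke the existence of a cocomplete stable $\infty$-category $\cM_{\loc}$ together with a universal finitary localising invariant $\cU_{\loc}: \Cat^{\Lex}_{\infty,*} \to \cM_{\loc}$, characterised by the following property: for any cocomplete stable $\infty$-category $\cM$, precomposition with $\cU_{\loc}$ induces an equivalence between the $\infty$-category of colimit-preserving functors $\cM_{\loc} \to \cM$ and the $\infty$-category of finitary localising invariants $\Cat^{\Lex}_{\infty,*} \to \cM$. Two further properties are essential for the argument: first, that $\cU_{\loc}$ admits a canonical lax monoidal enhancement; and second, that $\cM_{\loc}$ is compactly generated. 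Both are well-known for the Blumberg--Gepner--Tabuada theory of localising invariants on small stable $\infty$-categories, and their left-exact pointed analogues are available in the literature on noncommutative motives.

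Granting these ingredients, the argument is essentially formal. Applying Theorem \ref{main theorem 1} to $\cU_{\loc}$ yields that the assembly map $A_{\cF, \cU_{\loc} \circ C_G}$ is a phantom equivalence in $\cM_{\loc}$; compact generation combined with Remark \ref{remark: phathom=equivalence} then upgrades this to an actual equivalence. For a general finitary localising invariant $H: \Cat^{\Lex}_{\infty,*} \to \cM$, the universal property produces a colimit-preserving factorisation $H \simeq \overline{H} \circ \cU_{\loc}$. The functor $\overline{H}$ preserves equivalences automatically, and because it preserves colimits one checks directly that $\overline{H}(A_{\cF, \cU_{\loc} \circ C_G}) \simeq A_{\cF, H \circ C_G}$. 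Therefore $A_{\cF, H \circ C_G}$ is an equivalence in $\cM$, and in particular a phantom equivalence.

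The main obstacle I anticipate is not the deduction but the bookkeeping: one must cite or assemble, in the precise setting of left-exact pointed $\infty$-categories and finitary localising invariants as defined in \cite{BKW21}, the existence of $\cU_{\loc}$, its universal property, the compact generation of $\cM_{\loc}$, and the lax monoidal enhancement needed to invoke Theorem \ref{main theorem 1}. Once these are in place, no additional input from the DFHJ machinery of \cite{BKW21} is required.
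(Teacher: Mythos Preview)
Your overall strategy---factor an arbitrary finitary localising invariant through the universal one $\cU_{\loc}$, use that $\cU_{\loc}$ is lax monoidal to apply Theorem~\ref{main theorem 1}, then push forward along the colimit-preserving factor $\overline{H}$---is exactly the paper's. The paper routes this through $\Cat^{\ex}_{\infty,*}$ via $\Stab$ and $\incl$ and the Blumberg--Gepner--Tabuada $\cU_{\loc}$, but that is the bookkeeping you anticipated, not a substantive difference.

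There is, however, a genuine gap. You rely on $\cM_{\loc}$ being compactly generated so that the phantom equivalence produced by Theorem~\ref{main theorem 1} upgrades to an honest equivalence, which $\overline{H}$ then preserves. Compact generation of $\cM_{\loc}$ is not available in the literature; indeed, the whole reason phantom objects and phantom equivalences appear in \cite{BKW21} is precisely that the relevant targets are not known to be compactly generated. Without this, a colimit-preserving functor need not send phantom equivalences to phantom equivalences, so your transport step breaks down.

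The paper circumvents this as follows. Rather than using only the \emph{statement} of Theorem~\ref{main theorem 1}, it goes back into its proof (specifically \cite[Theorem 1.4 and Proposition 2.33]{BKW21}) to extract a sharper conclusion about the cofiber $\mathrm{cof}$ of the assembly map for $\cU_{\loc}\circ\Stab$: the diagonal $\Delta:\mathrm{cof}\to\prod_{\NN}\mathrm{cof}$ factors through $\bigoplus_{\NN}\mathrm{cof}\to\prod_{\NN}\mathrm{cof}$. This diagrammatic witness of being a phantom \emph{is} preserved by any colimit-preserving functor (the paper isolates this as a short lemma), so applying $\overline{H\circ\incl}$ yields the same factorisation for the cofiber of $A_{\cF,H\circ C_G}$, whence that cofiber is a phantom object and the assembly map a phantom equivalence. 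In short: you need to transport the \emph{reason} the cofiber is phantom, not merely the fact that it is.
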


\begin{remark}
\renewcommand{\labelenumi}{(\roman{enumi})}
\begin{enumerate}
    \item As explained in Section \ref{section: the proof} below, the proof of Theorem \ref{main theorem 2} makes essential use of Theorem \ref{main theorem 1}.
\item Theorem \ref{main theorem 2} was incorporated in the latest version of \cite{BKW21}; consult \cite[Rk. 1.5]{BKW21}.
\end{enumerate}
\end{remark}

\begin{remark}[Lax monoidal]
    Note that being lax monoidal is an extra structure on a functor and not a property of a functor\footnote{Let $C$ and $D$ be two symmetric monoidal $\infty$-categories such that $D$ admits all colimits. As proved in \cite[Proposition 2.12]{Glasman16}, to give a lax monoidal structure on a functor $H: C \rightarrow D$ is equivalent to give a $E^\infty$-monoid structure on the object $H$ of the symmetric monoidal $\infty$-category $\Fun(C,D)$ (equipped with the Day convolution product).}.
    Hence, Theorem \ref{main theorem 2} shows that a finitary localizing invariant $H$ does not need to be equipped with this extra structure in order for the assembly map (\ref{eq: assembly-0}) to be a phantom equivalence.
\end{remark}
\begin{remark}[Full Farrell-Jones conjecture]
   As explained in \cite[Definition 1.6]{BKW21}, there is a variant of the Farrell-Jones conjecture called the \emph{Full Farrell-Jones conjecture}.
   Let $H: \Cat^{\Lex}_{\infty,*}\rightarrow  \M$ be a lax monoidal finitary localising invariant.
   In addition to the above Theorem \ref{main theorem 1}, Bunke, Kasprowski and Winges also proved in \cite{BKW21} that the class of groups which satisfy the Full Farrell-Jones conjecture for $H$ is quite large and closed under several constructions; consult \cite[Theorem 1.7]{BKW21}.
   Similarly to Theorem \ref{main theorem 1}, this result also holds ipsis verbis with $H$ a finitary localizing invariant (the lax monoidal assumption is not necessary).
   Simply follow the same proof of \cite[Theorem 1.7]{BKW21} and replace the references to      \cite[Theorems 5.1, 6.1 and 7.1]{BKW21} by the reference to Theorem \ref{main theorem 2} above. 
\end{remark}

\section{Proof of Theorem \ref{main theorem 2}}
\label{section: the proof}
Let $\Cat^{\ex}_{\infty,*}$ be the (large) $\infty$-category of small stable $\infty$-categories, $\Stab: \Cat^{\Lex}_{\infty,*} \rightarrow \Cat^{\ex}_{\infty,*}$ the stablization functor and $\incl: \Cat^{\ex}_{\infty,*} \rightarrow \Cat^{\Lex}_{\infty,*}$ the inclusion functor; consult \cite[\S 7.4]{BCKW19}.
As proved in \cite[Lemma 7.41]{BCKW19}, we have the following adjunction:

\begin{align*}
\xymatrix@C=4.5pc @R=2pc{
\Cat^{\Lex}_{\infty,*} \ar@<-.45pc>[d]_{\Stab}\\
\Cat^{\ex}_{\infty,*} \ar@<-.45pc>[u]_{\incl}.}
\end{align*}

Moreover, as explained in \cite[\S 6.2]{BCKW19}, the composition $H \circ \incl$ is a localising invariant in the sense of \cite{BGT13}.
Furthermore, as proved in \cite[Lemma 6.9(1)]{BCKW19}, the functors $H$ and $H \circ \incl \circ \Stab$ are naturally equivalent.
Note that in \cite{BCKW19,BKW21} a localising invariant is referred to as a stable finitary localising invariant; in what follows, we stick to this latter terminology.

Recall from \cite[\S 8]{BGT13} the construction of the universal stable finitary localising invariant $\cU_{\loc}: \Cat^{\ex}_{\infty, *} \rightarrow \cM_{\loc}$; due to this universal property, $\cM_{\loc}$ is called the (large) $\infty$-category of noncommutative motives; consult \cite{Tab15}.
Since the functor $H \circ \incl$ is a stable finitary localising invariant, \cite[Theorem 8.7]{BGT13} implies that it factors through $\cU_{\loc}$; let us denote by $\overline{H \circ \incl}$ the induced functor.
Under these notations, we have the following commutative diagram (up to equivalence):
\begin{align}
\label{the diagram}
\begin{split}
\xymatrix@C=4.5pc @R=2pc{
G\Orb \ar[rd]^-{C_G} \ar@/^1.5pc/[rrd]^-{H \circ C_G} & & \\
BG \ar[u]^-{j^G}  \ar[r]_-{C} & \Cat^{\Lex}_{\infty,*} \ar[r]^-{H} \ar@<-.4pc>[d]_{\Stab}  & \mathrm{M}\\
 & \Cat^{\ex}_{\infty,*} \ar@<-.4pc>[u]_{\incl} \ar[d]_-{\cU_{\loc}} \ar[ru]_{H \circ \incl} &\\
& \cM_{\loc} \ar@/_1.5pc/[ruu]_-{\overline{H \circ \incl}}  &.
}
\end{split}
\end{align}

Since $\cU_{\loc}$ is a stable finitary localising invariant, \cite[Lemma 6.9(2)]{BCKW19} implies that the composition $\cU_{\loc} \circ \Stab$ is a finitary localising.
Moreover, as explained in \cite[Theorem 5.8]{BGT14} and \cite[\S 1]{BKW21}, respectively, the functors $\cU_{\loc}$ and $\Stab$ are symmetric monoidal.
This implies that the composition $\cU_{\loc} \circ \Stab$ is a lax monoidal finitary localising invariant.
Consequently,  Theorem \ref{main theorem 1} (with $H$ replaced by $\cU_{\loc} \circ \Stab$) implies that the assembly map
\begin{align}
    \label{eq: assembly-1}
    A_{\cF, (\cU_{\loc}\circ \Stab \circ C_G)}: \colim_{G_{\cF}\Orb}  (\cU_{\loc}\circ \Stab \circ C_G) \longrightarrow (\cU_{\loc}\circ \Stab) (\colim_{BG}C)
\end{align}
is a phantom equivalence.
Let us denote by $\mathrm{cof}$ the  cofiber of (\ref{eq: assembly-1}). As explained in the proofs of \cite[Theorem 1.4 and Proposition 2.33]{BKW21}, the diagonal map $\Delta: \mathrm{cof} \rightarrow \Pi_\NN \mathrm{cof}$ factors through the canonical map $\oplus_{\NN}\mathrm{cof} \rightarrow \Pi_\NN \mathrm{cof}$. 
Therefore, since the functor $\overline{H \circ \incl}$ is colimit preserving, it follows from Lemma \ref{lemma: a functor preserves a nice property} below that $\Delta: (\overline{H \circ \incl})(\mathrm{cof}) \rightarrow \Pi_{\NN} (\overline{H \circ \incl})(\mathrm{cof})$
factors through the canonical morphism $\oplus_{\NN}(\overline{H \circ \incl})(\mathrm{cof})\rightarrow \Pi_{\NN}(\overline{H \circ \incl})(\mathrm{cof})$.
Following \cite[Lemma 2.8]{BKW21}, this implies, in particular, that $(\overline{H \circ \incl})(\mathrm{cof})$ is a phantom object.

Now, let us apply the functor $\overline{H \circ \incl}$ to the above assembly map (\ref{eq: assembly-1}).
Since the functor $\overline{H \circ \incl}$ is colimit preserving, it follows from the above commutative diagram (\ref{the diagram}) that $(\overline{H \circ \incl})(\ref{eq: assembly-1})$ identifies with the assembly map (\ref{eq: assembly-0}).
Moreover, $(\overline{H \circ \incl})(\mathrm{cof})$ identifies with the cofiber  of (\ref{eq: assembly-0}).
Consequently, since $(\overline{H \circ \incl})(\mathrm{cof})$ is a phantom object, we conclude that the assembly map (\ref{eq: assembly-0}) is a phantom equivalence; consult \cite[Rk. 2.6]{BKW21}. \hfill $\qed$

\begin{lemma}
\label{lemma: a functor preserves a nice property}
Let $o$ be an object of an $\infty$-category $\mathcal{E}$ such that the diagonal map $\Delta: o \rightarrow \Pi_\NN o$ factors through the canonical map $\oplus_{\NN} o \rightarrow \Pi_\NN o$.
Given a colimit preserving functor $F : \mathcal{E}\rightarrow \mathcal{D}$, the diagonal map $\Delta: F(o)\rightarrow \Pi_\NN F(o)$ factors through the canonical map $\oplus_{\NN}F(o) \rightarrow \Pi_\NN F(o)$. 
\end{lemma}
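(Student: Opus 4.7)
The plan is to lift the given coproduct-factorization of $\Delta_o$ along $F$, taking advantage of the fact that $F$ preserves coproducts (as part of preserving colimits) even though it need not preserve countable products.

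By hypothesis, there exists a morphism $f : o \to \oplus_\NN o$ with $\iota_o \circ f \simeq \Delta_o$, where $\iota_o : \oplus_\NN o \to \Pi_\NN o$ denotes the canonical map. Applying $F$ and using the canonical equivalence $\phi : F(\oplus_\NN o) \simeq \oplus_\NN F(o)$ (which exists because $F$ preserves colimits, in particular countable coproducts), I would set $\widetilde{f} := \phi \circ F(f) : F(o) \to \oplus_\NN F(o)$. The claim I would then prove is that
\[
\iota_{F(o)} \circ \widetilde{f} \; \simeq \; \Delta_{F(o)},
\]
which gives the desired factorization.

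To verify this claim, I would use the universal characterization of $\Delta_{F(o)}$ as the unique (up to a contractible space of choices) map into the product all of whose projections are the identity. Recall that in any pointed $\infty$-category with countable coproducts and products, the composition $\pi_i \circ \iota : \oplus_\NN X \to \Pi_\NN X \to X$ coincides with the ``coproduct projection'' $p_i : \oplus_\NN X \to X$ (identity on the $i$-th summand, zero elsewhere). Post-composing the hypothesized factorization with each $\pi_i$ therefore yields $p_i \circ f \simeq \id_o$ for every $i \in \NN$. Since $F$ preserves coproducts and, being colimit preserving, also preserves the zero object (it is an empty colimit), under $\phi$ it sends each $p_i$ to $p_i^{F(o)}$. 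Consequently $p_i^{F(o)} \circ \widetilde{f} \simeq \id_{F(o)}$, i.e.\ $\pi_i^{F(o)} \circ \iota_{F(o)} \circ \widetilde{f} \simeq \id_{F(o)}$, for every $i \in \NN$, and the universal property of the product then identifies $\iota_{F(o)} \circ \widetilde{f}$ with $\Delta_{F(o)}$.

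The main subtle point — not so much an obstacle as the whole reason the lemma is non-trivial — is that one cannot simply apply $F$ to the given factorization, because $F$ is not assumed to preserve the product $\Pi_\NN o$. The workaround is precisely the observation above: relocate the entire structural picture to the coproduct side, where $F$ does preserve everything, and then exploit the projection-wise characterization of the diagonal to re-identify the resulting map with $\Delta_{F(o)}$ on the target side. In the $\infty$-categorical setting one should make sure that the various identities above are taken as (compatible) homotopies in mapping spaces, but all the requisite coherences are automatic from the functoriality of $F$ and from the universal properties of coproducts and products.
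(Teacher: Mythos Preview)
Your argument is correct and follows essentially the same route as the paper's proof: apply $F$ to the given factorization, use that $F$ preserves the coproduct, and then invoke the universal property of the product in $\mathcal{D}$ to identify the resulting composite with $\Delta_{F(o)}$. The paper's proof is a two-sentence sketch of exactly this; your version simply unpacks the verification projection-by-projection (via $p_i \circ f \simeq \id_o$ and the preservation of the $p_i$ under $F$), making explicit what the paper leaves implicit.
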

\begin{proof}
Apply the functor $F$ to the factorization of the diagonal map $\Delta :o \rightarrow \Pi_\NN o$ through the canonical map $\oplus_{\NN} o \rightarrow \Pi_\NN o$.
Thanks to the universal property of the product and to the fact that $F$ is colimit preserving, one hence obtains a factorization of the diagonal map $\Delta: F(o)\rightarrow \Pi_\NN F(o)$ through the canonical map $\oplus_{\NN}F(o) \rightarrow \Pi_\NN F(o)$.
\end{proof}

\noindent \textbf{Acknowledgments}. I would like to thank Gonçalo Tabuada for many discussions regarding \cite{BKW21} and the anonymous referee for suggestions concerning the improvement of this paper.
This work is funded by national funds through the FCT - Fundação para a Ciência e a Tecnologia, I.P., under the scope of the projects UIDB/00297/2020 and UIDP/00297/2020 (Center for Mathematics and Applications) and the PhD scholarship SFRH/BD/144305/2019. 

\bibliographystyle{siam}
\bibliography{biblio}

\begin{thebibliography}{10}

\bibitem{BT13}
{\sc P.~Balmer and G.~Tabuada}, {\em Fundamental isomorphism conjecture via
  non-commutative motives}, Math. Nachr., \textbf{286} (2013), pp.~791--798.

\bibitem{BL06}
{\sc A.~Bartels and W.~L{\"u}ck}, {\em Isomorphism conjecture for homotopy
  {{\(K\)}}-theory and groups acting on trees}, J. Pure Appl. Algebra,
  \textbf{205} (2006), pp.~660--696.

\bibitem{BR07}
{\sc A.~Bartels and H.~Reich}, {\em Coefficients for the {Farrell}-{Jones}
  conjecture}, Adv. Math., \textbf{209} (2007), pp.~337--362.

\bibitem{BGT13}
{\sc A.~J. Blumberg, D.~Gepner, and G.~Tabuada}, {\em A universal
  characterization of higher algebraic {{\(K\)}}-theory}, Geom. Topol.,
  \textbf{17} (2013), pp.~733--838.

\bibitem{BGT14}
\leavevmode\vrule height 2pt depth -1.6pt width 23pt, {\em Uniqueness of the
  multiplicative cyclotomic trace}, Adv. Math., \textbf{260} (2014),
  pp.~191--232.

\bibitem{BCKW19}
{\sc U.~Bunke, D.-C. Cisinski, D.~Kasprowski, and C.~Winges}, {\em {Controlled
  objects in left-exact $\infty$-categories and the Novikov conjecture}},
  (2019).
\newblock Available at https://arxiv.org/abs/1911.02338.

\bibitem{BKW21}
{\sc U.~Bunke, D.~Kasprowski, and C.~Winges}, {\em {On the Farrell-Jones
  conjecture for localising invariants}},  (2021).
\newblock Available at https://arxiv.org/abs/2111.02490.

\bibitem{DL98}
{\sc J.~F. Davis and W.~L{\"u}ck}, {\em Spaces over a category and assembly
  maps in isomorphism conjectures in {{\(K\)}}- and {{\(L\)}}-theory},
  \(K\)-Theory, \textbf{15} (1998), pp.~201--252.

\bibitem{FJ93}
{\sc {F. T. Farrell and L. E. Jones}}, {\em Isomorphism conjectures in
  algebraic {{\(K\)}}-theory}, J. Am. Math. Soc., \textbf{6} (1993),
  pp.~249--297.

\bibitem{Glasman16}
{\sc S.~Glasman}, {\em Day convolution for {{\(\infty\)}}-categories}, Math.
  Res. Lett., \textbf{23} (2016), pp.~1369--1385.

\bibitem{Luck10}
{\sc W.~L{\"u}ck}, {\em On the {Farrell}-{Jones} and related conjectures}, in
  Cohomology of groups and algebraic \(K\)-theory. Selected papers of the
  international summer school on cohomology of groups and algebraic
  \(K\)-theory, Hangzhou, China, July 1--3, 2007, Somerville, MA: International
  Press; Beijing: Higher Education Press, 2010, pp.~269--341.

\bibitem{Luck11}
\leavevmode\vrule height 2pt depth -1.6pt width 23pt, {\em {{\(K\)}}- and
  {{\(L\)}}-theory of group rings}, in Proceedings of the international
  congress of mathematicians (ICM 2010), Hyderabad, India, August 19--27, 2010.
  Vol. II: Invited lectures, Hackensack, NJ: World Scientific; New Delhi:
  Hindustan Book Agency, 2011, pp.~1071--1098.

\bibitem{Luck20}
\leavevmode\vrule height 2pt depth -1.6pt width 23pt, {\em Assembly maps}, in
  Handbook of homotopy theory, Boca Raton, FL: CRC Press, 2020, pp.~851--890.

\bibitem{LR05}
{\sc W.~L{\"u}ck and H.~Reich}, {\em The {Baum}-{Connes} and the
  {Farrell}-{Jones} conjectures in {{\(K\)}}- and {{\(L\)}}-theory}, in
  Handbook of \(K\)-theory. Vol. 1 and 2, Berlin: Springer, 2005, pp.~703--842.

\bibitem{LR06}
\leavevmode\vrule height 2pt depth -1.6pt width 23pt, {\em Detecting
  {{\(K\)}}-theory by cyclic homology}, Proc. Lond. Math. Soc. (3), \textbf{93}
  (2006), pp.~593--634.

\bibitem{Tab15}
{\sc G.~{Tabuada}}, {\em Noncommutative motives \emph{(with a preface by Yuri
  I. Manin)}}, vol.~\textbf{63} of University Lecture Series, Providence, RI:
  American Mathematical Society (AMS), 2015.

\end{thebibliography}

\end{document}